\numberwithin{equation}{section}
\newcommand{\bea}{\begin{eqnarray}}
\newcommand{\eea}{\end{eqnarray}}
\newcommand{\be}{\begin{eqnarray*}}
\newcommand{\ee}{\end{eqnarray*}}
\newtheorem{theorem}{Theorem}[section]
\newtheorem{lemma}{Lemma}[section]
\newtheorem{corollary}{Corollary}[section]
\newtheorem{example}{Example}[section]
\begin{document}
\title[Automorphism Groups of Idempotent Evolution Algebras]{On Automorphism Groups of \\ Idempotent Evolution Algebras}
\author{Songpon Sriwongsa}
\address{Department of Mathematics, Faculty of Science, King Mongkut's University of Technology Thonburi (KMUTT), Bangkok 10140, Thailand} 
\email{songpon.sri@kmutt.ac.th}
\author{Yi Ming Zou}
\address{Department of Mathematical Sciences, University of Wisconsin, Milwaukee, WI 53201, USA}
\email{ymzou@uwm.edu}
\maketitle
\footnote{Note added in this version: This paper was first submitted to Proc. of AMS in May 2019, since the editor was unable to find a referee for the paper for over two years, it was then withdrawn from PAMS upon the suggestion of the editor and submitted to LAA. After it was accepted by LAA and subsequently posted on arXiv, the authors were informed the existence of the reference \cite{Cos} by A. Viruel. Partial results of this paper were presented in the 42nd Australasian Conference on Combinatorial Mathematics and Combinatorial Computing (42ACCMCC) in December 2019.}

\begin{abstract}
We study the automorphism group of an idempotent evolution algebra, show that any finite group can be the automorphism group of an evolution algebra, and describe certain evolution algebras with given automorphism groups. In particular, we classify $n$-dimensional idempotent evolution algebras whose automorphism group is isomorphic to the symmetric group $S_n$, and classify idempotent evolution algebras with maximal diagonal automorphism subgroups.
\end{abstract}
\par
\medskip
\section{Introduction}
Evolution algebras are non-associative and commutative algebras motivated by the evolution laws of genetics  \cite{Tian1}. These algebras are related to different fields and present many interesting properties \cite{Cas1, Cas2, Cas3, Cas4, Casa1, Eld1, Tian1}. Here we consider finite dimensional evolution algebras over a field $\mathbb F$. According to \cite{Tian1}, an $n$-dimensional evolution algebra $\mathcal E$ over $\mathbb F$ can be defined by using a natural basis $e_1,...,e_n$ and a structure matrix $A =(a_{ij}),\;a_{ij}\in \mathbb{F}, \;1\le i, j\le n$, such that
\bea\label{e11}
\quad e_ie_j = 0,\;\mbox{if $i\ne j$}\quad \mbox{and}\quad(e_1^2, ..., e_n^2) = (e_1, ..., e_n)A.
\eea
Note that in general, natural bases are not unique and different natural bases lead to different structure matrices \cite{Eld1, Tian1}
\par\medskip
We call an evolution algebra $\mathcal E$ {\it idempotent} if $\mathcal{E}^2 = \mathcal E$. From (\ref{e11}), it is clear that
\bea\label{e12}
\qquad\quad\mathcal{E}^2 = \mathcal E\Leftrightarrow e_1^2, ..., e_n^2\; \mbox{form a basis of}\; \mathcal E\Leftrightarrow A\; \mbox{is nonsingular}. 
\eea
\par
We denote by $\mathcal{E}(A)$ the evolution algebra with the structure matrix $A$ if we need to specify $A$, and denote by $\Gamma_A$ (or $\Gamma_{\mathcal E}$) the graph whose adjacency matrix is obtained from $A$ by replacing all nonzero entries of $A$ by $1$. The vertices of $\Gamma_A$ will be just $e_1, ..., e_n$. We call $\Gamma_A$ the graph associated with $\mathcal E$. Note that $\Gamma_A$ is a digraph.
\par
The automorphism group of an idempotent finite dimensional evolution algebra has been studied in \cite{Eld1, Eld2} via the associated graph $\Gamma_A$. In particular, it was shown in \cite{Eld1} that the automorphism group of a finite dimensional idempotent evolution algebra is finite. 
\par
In section 2, we revisit the main results of \cite{Eld1, Eld2} on the automorphism groups of finite dimensional idempotent evolution algebras using a different approach to gain more insight on these automorphism groups. The classifications of evolution algebras in dimensions $\le 4$ have been attempted, however, the classification lists are long even for these low dimensions \cite{Cas2, Cas3} (even with incomplete classifications). In this paper, we consider the problem from a different viewpoint: classify evolution algebras with a given automorphism group.  Since the automorphism group of an idempotent evolution algebra is finite (usually not finite otherwise \cite{Eld2}), it is natural to consider idempotent evolution algebras associated with a given finite group. In section 3, we show that any finite group can be the automorphism group of an evolution algebra, and identify certain evolution algebras with given groups. In particular, we give a classification of $n$-dimensional idempotent evolution algebras that have $S_n$ as the automorphism group. In section 4, we show that the maximal diagonal automorphism subgroup that an $n$-dimensional idempotent evolution algebra can have is the cyclic group $C_{2^n - 1}$ of order $2^n - 1$. Under the assumption that the field $\mathbb F$ is algebraically closed of characteristic $0$, we show that, up to isomorphism, there exists only one $n$-dimensional idempotent evolution algebra whose diagonal automorphism subgroup is $C_{2^n - 1}$, and its automorphism group is $C_{2^n-1}\rtimes C_n$. 
\section{Automorphisms of an idempotent evolution algebra}
\par\medskip 
Let $\mathcal{E}$ (resp. $\mathcal{E}'$) be an idempotent evolution algebra with a natural basis $e_1, ..., e_n$ (resp. $e_1', ..., e_n'$) and the structure matrix $A =(a_{ij})$ (resp. $B = (b_{ij})$). If $\phi : \mathcal{E} \rightarrow \mathcal{E}'$ is an isomorphism, then $(\phi(e_1), ..., \phi(e_n))$ is a natural basis of $\mathcal{E}'$ with the structure matrix $A$. Let $P =(p_{ij})$ be the matrix of bases change in $\mathcal{E}'$ defined by $(\phi(e_1), ..., \phi(e_n)) = (e_1', ..., e_n')P$. Then we have the following from \cite{Tian1}:
\bea\label{e21}
BP^{(2)} = PA \quad\mbox{and}\quad B(P\ast P) = 0,
\eea
where $P^{(2)} = (p_{ij}^2)$, and $P\ast P = (c_{ij}^k)$ is an $n\times \frac{n(n-1)}{2}$ matrix whose rows are indexed by $k$ and columns are indexed by the pairs $(i,j)$ such that $1\le i<j\le n$. The entries are defined by $c_{ij}^k = p_{ki}p_{kj}, i<j$. Since we assumed that $\mathcal{E}'$ is idempotent, $B$ is nonsingular, so $P\ast P = 0$, which implies that for each row $k$ ($1\le k\le n$) of $P$, there exists exactly one nonzero element, say $p_{kk'}$. By the fact that $\det(P)\ne 0$, we see that there exists a permutation $\sigma\in S_n$ such that $\sigma(k')= k, 1\le k\le n$. Thus we have the following:
\begin{theorem}
Two idempotent evolution algebras $\mathcal{E}$ and $\mathcal{E}'$ are isomorphic if and only if there exists a permutation $\sigma\in S_n$ and an $n\times n$ matrix $P=(p_{ij})$, such that $p_{ij}\ne 0 \Leftrightarrow i=\sigma(j)$ and $BP^{(2)} = PA$.
\end{theorem}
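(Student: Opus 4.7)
The forward direction is essentially carried out in the paragraph preceding the theorem, so my plan is to streamline that argument and then supply the converse, which has not yet been addressed.

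For the forward direction, I would take an isomorphism $\phi:\mathcal{E}\to\mathcal{E}'$ and, following the setup above the theorem, define $P$ by $(\phi(e_1),\dots,\phi(e_n)) = (e_1',\dots,e_n')P$. The two identities in (\ref{e21}) come from expanding $\phi(e_i)\phi(e_j)$ in the basis $e_1',\dots,e_n'$: the diagonal case $\phi(e_i^2)=\phi(e_i)^2$ produces $BP^{(2)}=PA$ entrywise, and the off-diagonal vanishing $\phi(e_i)\phi(e_j)=0$ for $i\ne j$ is exactly $B(P\ast P)=0$. Idempotency of $\mathcal{E}'$ makes $B$ invertible by (\ref{e12}), so $P\ast P=0$, forcing each row of $P$ to have at most one nonzero entry. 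Since $\det(P)\ne 0$, the map $k\mapsto k'$ from row index to the position of its nonzero entry is a bijection, whose inverse is the desired $\sigma\in S_n$ satisfying $p_{ij}\ne 0\Leftrightarrow i=\sigma(j)$.

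For the converse, given $\sigma$ and $P$ as stated, the plan is to define $\phi:\mathcal{E}\to\mathcal{E}'$ by $(\phi(e_1),\dots,\phi(e_n))=(e_1',\dots,e_n')P$, equivalently $\phi(e_j)=p_{\sigma(j),j}\,e_{\sigma(j)}'$. Since $P$ has a single nonzero entry in each row and each column, and all of these are nonzero, $P$ is invertible, so $\phi$ is a linear bijection. It remains to verify $\phi$ is multiplicative on the basis. For $i\ne j$, $\phi(e_i e_j)=0$, while $\phi(e_i)\phi(e_j)=p_{\sigma(i),i}p_{\sigma(j),j}\,e_{\sigma(i)}'e_{\sigma(j)}'=0$ because $\sigma(i)\ne\sigma(j)$.

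For $i=j$, the idea is to compute both sides in the basis $e_1',\dots,e_n'$ and read off that agreement is exactly the equation $BP^{(2)}=PA$. On one hand, $\phi(e_i^2)=\sum_k a_{ki}\phi(e_k)=\sum_k a_{ki}p_{\sigma(k),k}e_{\sigma(k)}'$, whose coefficient on $e_l'$ equals $p_{l,\sigma^{-1}(l)}a_{\sigma^{-1}(l),i}=(PA)_{li}$ (only the column $k=\sigma^{-1}(l)$ of row $l$ of $P$ contributes). On the other hand, $\phi(e_i)^2=p_{\sigma(i),i}^2(e_{\sigma(i)}')^2=p_{\sigma(i),i}^2\sum_l b_{l,\sigma(i)}e_l'$, whose coefficient on $e_l'$ is $b_{l,\sigma(i)}p_{\sigma(i),i}^2=(BP^{(2)})_{li}$, again because column $i$ of $P^{(2)}$ has its only nonzero entry in row $\sigma(i)$. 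The hypothesis $BP^{(2)}=PA$ then gives $\phi(e_i^2)=\phi(e_i)^2$, completing the proof.

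The only real obstacle is notational bookkeeping, specifically keeping track of which row/column indices are constrained by $\sigma$ versus $\sigma^{-1}$; once the ``one nonzero entry per row/column'' pattern is exploited to collapse each sum to a single term, the matrix identity $BP^{(2)}=PA$ translates transparently into multiplicativity of $\phi$.
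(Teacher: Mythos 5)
Your proposal is correct and follows essentially the same route as the paper: the forward direction reproduces the argument in the paragraph preceding the theorem (deriving $P\ast P=0$ from the nonsingularity of $B$ and extracting $\sigma$ from $\det(P)\ne 0$), and your converse is the routine reversal that the paper leaves implicit, with the index bookkeeping ($(PA)_{li}$ and $(BP^{(2)})_{li}$ each collapsing to a single term) carried out correctly.
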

\par
Now consider $\mathcal{G} = \mbox{Aut}(\mathcal{E})$ for an idempotent evolution algebra $\mathcal{E}$. For $g\in \mathcal{G}$, let $G = (g_{ij})$ be the matrix of $g$ with respect to a natural basis $e_1, ..., e_n$, that is, $g(e_i) = \sum_{i=1}^ng_{ki}e_k, 1\le i\le n$. Applying Theorem 2.1 to the setting $A=B, G = P$, we have the following (\cite {Eld1}, Corollary 4.7): 
\begin{corollary}   For any $g\in \mathcal{G}$, there exists an element $\sigma$ of the symmetric group $S_n$ such that  $g(e_i) = d_{i}e_{\sigma(i)}$ for some $0\ne d_{i} \in \mathbb{F}$, $1\le i\le n$. 
\end{corollary}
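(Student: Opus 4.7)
The plan is to observe that the corollary is a direct specialization of Theorem 2.1 to the case of an automorphism, so no new machinery is required; one only needs to unpack what the theorem says about the matrix $G$.

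First I would take $\mathcal{E}' = \mathcal{E}$ and $\phi = g \in \mathcal{G} = \mathrm{Aut}(\mathcal{E})$ in the setup preceding Theorem 2.1. Because $g$ sends the natural basis $e_1,\dots,e_n$ to another natural basis of the same algebra, the change-of-basis matrix in the target is exactly the matrix $G$ of $g$ with respect to $e_1,\dots,e_n$, so $P = G$, and the structure matrices satisfy $A = B$. Theorem 2.1 then applies and yields a permutation $\sigma \in S_n$ such that $g_{ij} \neq 0 \Leftrightarrow i = \sigma(j)$.

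Next I would translate this pattern of zeros back into the action of $g$ on basis vectors. Since the $j$-th column of $G$ has exactly one nonzero entry, located in row $\sigma(j)$, we have
\begin{equation*}
g(e_j) \;=\; \sum_{i=1}^n g_{ij}\,e_i \;=\; g_{\sigma(j),j}\,e_{\sigma(j)},
\end{equation*}
so setting $d_j := g_{\sigma(j),j}$ gives the desired formula $g(e_j) = d_j e_{\sigma(j)}$. Finally, to verify $d_j \neq 0$, I would invoke the fact that $g$ is an automorphism, so $G$ is invertible; an invertible matrix cannot have a zero column, hence the unique potentially nonzero entry $g_{\sigma(j),j}$ in column $j$ is actually nonzero.

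There is no real obstacle here: the heart of the argument — that the matrix of an isomorphism between idempotent evolution algebras in their natural bases is a monomial matrix — has already been done in the proof of Theorem 2.1 via the identity $B(P \ast P) = 0$ together with the invertibility of $B$. The only thing to be careful about is bookkeeping the index conventions, namely that the condition $p_{ij} \neq 0 \Leftrightarrow i = \sigma(j)$ is exactly what is needed to conclude that $g$ permutes the lines $\mathbb{F}e_i$ according to $\sigma$ with the scalars $d_i$.
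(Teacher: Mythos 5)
Your proposal is correct and follows essentially the same route as the paper, which likewise obtains the corollary by specializing Theorem 2.1 (and the monomial-matrix argument in its proof via $B(P\ast P)=0$) to the case $\mathcal{E}'=\mathcal{E}$, $B=A$, $P=G$. Your additional unpacking of the zero pattern of $G$ into the formula $g(e_j)=d_je_{\sigma(j)}$ is just the bookkeeping the paper leaves implicit.
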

For $g\in \mathcal{G}$, let $D_g = \mbox{diag}(d_{1}, ..., d_{n})$ be the diagonal matrix determined by $g$, and let $P_{\sigma}$ be the permutation matrix corresponds to $\sigma$, where the $d_i$'s and the $\sigma$ are determined by $g$ as in Corollary 2.1. Then in matrix form, we have
\bea\label{e22}
g\;:\; (e_1, ..., e_n) \longrightarrow (e_1, ..., e_n)P_{\sigma}D_g.
\eea 
\par
Define $\phi :\mathcal{G} \longrightarrow S_n$ by $\phi(g) = \sigma$, where $\sigma$ corresponds to the permutation matrix $P_{\sigma}$ determined by $g$ as in (\ref{e22}). Note that
\bea\label{e23}
P_{\sigma}^{-1}\mbox{diag}(d_1, ..., d_n)P_{\sigma} = \mbox{diag}(d_{\sigma(1)}, ..., d_{\sigma(n)}).
\eea
This can be seen by reducing it to the case of a transposition. Note also that
\be
P_{\sigma_2}^{-1}P_{\sigma_1}^{-1}\mbox{diag}(d_1, ..., d_n)P_{\sigma_1}P_{\sigma_2}
 = \mbox{diag}(d_{\sigma_1\sigma_2(1)}, ..., d_{\sigma_1\sigma_2(n)}).
\ee
This is because if $P_{\sigma_1}^{-1}\mbox{diag}(d_1, ..., d_n)P_{\sigma_1} = \mbox{diag}(d'_{1}, ..., d'_{n})$, where $d'_i = d_{\sigma_1(i)}$, then 
\be
P_{\sigma_2}^{-1}\mbox{diag}(d'_1, ..., d'_n)P_{\sigma_2} &=& \mbox{diag}(d'_{\sigma_2(1)}, ..., d'_{\sigma_2(n)})\\
{} &=& \mbox{diag}(d_{\sigma_1\sigma_2(1)}, ..., d_{\sigma_1\sigma_2(n)}).
\ee
Thus for $g_1,g_2\in \mathcal{G}$, we have
\bea\label{e24}
(P_{\sigma_1}D_{g_1})(P_{\sigma_2}D_{g_2}) = P_{\sigma_1}P_{\sigma_2}D'= P_{\sigma_1\sigma_2}D'
\eea
for some diagonal matrix $D'$, which implies that $\phi$ is a group homomorphism and $\ker(\phi)$ consists of the diagonal automorphisms of $\mathcal E$. 
\par
Furthermore, if $g(e_i) = d_{i}e_{\sigma(i)}, 1\le i\le n$, then from $g(e_i^2) = d_i^2e_{\sigma(i)}^2$ and $e_i^2 = \sum_{j=1}^na_{ji}e_j$ (see (\ref{e11})), we have
\bea\label{e25}
\sum_{j=1}^na_{ji}d_je_{\sigma(j)} = d_i^2\sum_{j=1}^na_{j\sigma(i)}e_j =  d_i^2\sum_{j=1}^na_{\sigma(j)\sigma(i)}e_{\sigma(j)}.
\eea
Since $(e_1, ..., e_n)$ is a basis, (\ref{e25}) holds if and only if
\bea\label{e26}
 d_ja_{ji} = d_i^2a_{\sigma(j)\sigma(i)}, \;\forall \;i, j. 
\eea
Then since $d_i\ne 0,\;1\le i\le n$, $a_{ji}\ne 0$ if and only if $a_{\sigma(j)\sigma(i)}\ne 0$. Thus $g$ induces a graph automorphism of $\Gamma_A$ via $\sigma$. 
\par
Let Aut$(\Gamma_A)$ be the graph automorphism group of $\Gamma_A$. Then Aut$(\Gamma_A)$ is a subgroup of $S_n$.
\par
For our convenience, we recall the following from \cite{Tian1}. Let $\mathcal{E}$ be an arbitrary $n$-dimensional evolution algebra with a natural basis $e_1, ..., e_n$ and the structure matrix $A$. For a linear endomorphism $g$ of $\mathcal E$, let $G$ be the matrix of $g$ with respect to $e_1, ..., e_n$. Then
\bea\label{e27}
\qquad\mbox{Aut}(\mathcal{E}) =\{G\;|\; \det(G)\ne 0,AG^{(2)}=GA,\mbox{and}\; A(G\ast G) = 0\}.
\eea
\par
Note that for a diagonal matrix $D$, $A(D\ast D) = 0$ is always true since $D\ast D = 0$, and $D^{(2)} = D^2$. If $D = \mbox{diag}(d_1, ..., d_n)\in\ker(\phi)\subset\mbox{Aut}(\mathcal{E})$, then $AD^{(2)} = DA$ is equivalent to 
\bea\label{e28}
d_j^2a_{ij} = d_ia_{ij},\quad 1\le i,j\le n.
\eea
If $\mathcal{E}$ is idempotent, then $\det(A)\ne 0$, which implies that there exists a permutation $\tau\in S_n$ such that for each $1\le j\le n$, $a_{\tau(j)j}\ne 0$. This in turn implies that $d_j^2=d_{\tau(j)}$ by (\ref{e28}). Let the order of $\tau$ be $t$. If $t=1$, i.e. $\tau$ is the identity, then $d_j^2=d_{j}$, so all $d_j = 1$ since $d_j\ne 0$. If $t> 1$, then
\be
d_j^{2^t} = (d_j^{2})^{2^{t-1}} = d_{\tau(j)}^{2^{t-1}} = \cdots = d_{\tau^t(j)} = d_j, \;\forall\; 1\le j\le n. 
\ee
This implies that all $d_j$ are roots of  $x^{2^t - 1} -1$ since $d_j\ne 0$. 
\par
Let $t_A$ be the smallest such $t$. That is, $t_A$ is the minimum order of the permutations $\tau\in S_n$ such that $a_{\tau(1)1}\cdots a_{\tau(n)n}\ne 0$. Then our discussions have proved the following theorem, which summarizes the main results of \cite{Eld1, Eld2} on the automorphism group of a finite dimensional idempotent evolution algebra (cf. Theorem 4.8 of \cite{Eld1}, and Theorem 3.2 of \cite{Eld2}).
\begin{theorem} Let $\mathcal E$ be an $n$-dimensional idempotent evolution algebra with natural basis $e_1, ..., e_n$ and structure matrix $A$, let $\mathcal G$ be the automorphism group of $\mathcal E$, and let ${\mathcal D}\subset \mathcal G$ be the subgroup of diagonal automorphisms.
\par
(1) The subgroup $\mathcal D$ is a normal subgroup of $\mathcal G$. The diagonal entries of an element of $\mathcal D$ are roots of $x^{2^{t_A} - 1} - 1$. In particular, $\mathcal D$ is a finite group of odd order.
\par  
(2) The quotient group $\mathcal{G}/\mathcal{D}$ is isomorphic to a subgroup of Aut$(\Gamma_A)$. In particular, $\mathcal G$ is finite.
\par\medskip 
\end{theorem}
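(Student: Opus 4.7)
The plan is to assemble the theorem directly from the calculations already performed in the discussion, since those computations were carried out with exactly this summary in mind. The main work is to organize which piece proves which claim, and to verify the finiteness/order conclusions that have not yet been explicitly articulated.

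For part (1), I would first note that $\mathcal{D} = \ker(\phi)$, where $\phi : \mathcal{G} \to S_n$ was constructed in (\ref{e22})--(\ref{e24}) and shown there to be a group homomorphism; hence $\mathcal{D}$ is automatically a normal subgroup of $\mathcal{G}$. For the description of the diagonal entries, I would invoke (\ref{e28}) together with the fact that, because $\mathcal{E}$ is idempotent and therefore $\det(A) \neq 0$, the Leibniz expansion of $\det(A)$ produces at least one permutation $\tau \in S_n$ with $a_{\tau(j)j} \neq 0$ for all $j$. Substituting into (\ref{e28}) gives $d_j^2 = d_{\tau(j)}$, and iterating this relation around each cycle of $\tau$ yields $d_j^{2^t} = d_{\tau^t(j)} = d_j$, where $t$ is the order of $\tau$. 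Taking the infimum of such $t$ recovers the definition of $t_A$, so each $d_j$ is a root of $x^{2^{t_A}-1} - 1$. Finiteness of $\mathcal{D}$ then follows because $|\mathcal{D}| \le (2^{t_A} - 1)^n$, and oddness follows because $2^{t_A}-1$ is odd, so the set of such roots injects into a cyclic group of odd order and therefore $|\mathcal{D}|$ divides a power of $2^{t_A}-1$.

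For part (2), I would verify that the image $\phi(\mathcal{G})$ lies inside $\mathrm{Aut}(\Gamma_A)$. This is already essentially done: by (\ref{e26}), for every $g \in \mathcal{G}$ the associated permutation $\sigma$ satisfies $a_{ji} \neq 0 \Leftrightarrow a_{\sigma(j)\sigma(i)} \neq 0$, which is precisely the definition of a graph automorphism of $\Gamma_A$ under the identification of vertices with basis vectors. The First Isomorphism Theorem then yields $\mathcal{G}/\mathcal{D} \cong \phi(\mathcal{G}) \le \mathrm{Aut}(\Gamma_A)$. Finiteness of $\mathcal{G}$ is a two-line consequence: $\mathrm{Aut}(\Gamma_A) \le S_n$ is finite, $\mathcal{D}$ is finite by part (1), and any group extension of a finite group by a finite group is finite.

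The only mildly delicate step is the cycle iteration establishing $d_j^{2^{t_A}} = d_j$: one must be careful that the minimizing $\tau$ in the definition of $t_A$ genuinely exists (which uses $\det(A) \neq 0$, not merely the weaker statement that each column of $A$ has a nonzero entry) and that the iteration closes after exactly $t_A$ steps on \emph{every} orbit of $\tau$, not just on the orbit containing $j$. Once this is in place, the rest of the proof is bookkeeping around $\phi$ and the first isomorphism theorem.
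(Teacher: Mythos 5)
Your proposal is correct and follows essentially the same route as the paper, which proves this theorem by assembling the preceding discussion: normality of $\mathcal{D}$ from $\mathcal{D}=\ker(\phi)$ via (\ref{e24}), the diagonal constraint $d_j^2=d_{\tau(j)}$ from (\ref{e28}) and nonsingularity of $A$, the cycle iteration giving roots of $x^{2^{t_A}-1}-1$, and the embedding $\mathcal{G}/\mathcal{D}\hookrightarrow \mbox{Aut}(\Gamma_A)$ from (\ref{e26}). Your explicit justification of the odd-order claim (embedding $\mathcal{D}$ into a product of cyclic groups of odd order) is a detail the paper leaves implicit, and is correct.
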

\par
It is clear that, in general, not every graph automorphism of the associated graph $\Gamma_A$ induces an automorphism of the evolution algebra $\mathcal E$. However, we have the following:
\begin{theorem}
Let $A = (a_{ij})$ be the adjacency matrix of a graph $\Gamma$ with $n$ vertices. If an evolution algebra $\mathcal E$ has $A$ as the structure matrix, then every element of Aut$(\Gamma)$ induces an element of $\mathcal{G} =\mbox{Aut}(\mathcal E)$. If in addition $A$ is nonsingular, that is, $\mathcal E$ is idempotent, then $\mathcal{G}/\mathcal{D} \cong\mbox{Aut}(\Gamma)$.
\end{theorem}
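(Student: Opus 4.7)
The plan is to prove the two statements in sequence: first verify that each graph automorphism, realized as its permutation matrix, satisfies the algebra-automorphism criterion (\ref{e27}); then, under the idempotency assumption, use the homomorphism $\phi:\mathcal G\to S_n$ built earlier in the section to promote this to the isomorphism $\mathcal G/\mathcal D\cong\mathrm{Aut}(\Gamma)$.

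For the first part, given $\sigma\in\mathrm{Aut}(\Gamma)$, I will set $G = P_\sigma$ and check the three conditions in (\ref{e27}). Non-singularity is immediate. The key reduction is that because $A$ is a $\{0,1\}$-matrix, so is $P_\sigma$, hence $P_\sigma^{(2)} = P_\sigma$; the relation $AG^{(2)} = GA$ therefore becomes $AP_\sigma = P_\sigma A$, which, expanded in coordinates under the convention of (\ref{e22}), is exactly $a_{\sigma(i)\sigma(j)} = a_{ij}$, i.e., the graph automorphism condition. The third condition $A(G\ast G) = 0$ is automatic since any permutation matrix has a unique nonzero entry in each row, so $P_\sigma\ast P_\sigma = 0$.

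For the second part, assuming $A$ is nonsingular, I invoke $\phi$: its image lies in $\mathrm{Aut}(\Gamma)$ by (\ref{e26}) and its kernel is $\mathcal D$ by the discussion following (\ref{e24}), giving an injection $\mathcal G/\mathcal D\hookrightarrow\mathrm{Aut}(\Gamma)$. The first part provides a preimage $P_\sigma\in\mathcal G$ for every $\sigma\in\mathrm{Aut}(\Gamma)$, so $\phi$ surjects onto $\mathrm{Aut}(\Gamma)$ and the injection is an isomorphism. No step is genuinely difficult; the one thing to watch is that the convention of (\ref{e22}) makes $AP_\sigma = P_\sigma A$ translate into the correct (rather than the inverse) graph automorphism condition, and that the collapse $P_\sigma^{(2)} = P_\sigma$ really uses the $\{0,1\}$-hypothesis on $A$.
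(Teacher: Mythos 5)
Your proof is correct and follows essentially the same route as the paper: check the conditions of (\ref{e27}) for $G=P_\sigma$, use $P_\sigma^{(2)}=P_\sigma$ to reduce to $AP_\sigma=P_\sigma A$, and then combine with Theorem 2.2(2) (equivalently, the homomorphism $\phi$ with kernel $\mathcal D$) for the isomorphism. One small misattribution: the collapse $P_\sigma^{(2)}=P_\sigma$ holds because any permutation matrix has $\{0,1\}$ entries and has nothing to do with $A$; the $\{0,1\}$-hypothesis on $A$ is actually what makes the entrywise equality $a_{ij}=a_{\sigma(i)\sigma(j)}$ (from $AP_\sigma=P_\sigma A$) coincide with the graph-automorphism condition, which in general only requires $a_{ij}\ne 0\Leftrightarrow a_{\sigma(i)\sigma(j)}\ne 0$.
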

\begin{proof}
Let $\mathcal E$ be defined by $A$ with the natural basis $e_1, ..., e_n$. If $P_{\sigma} = (p_{ij})$ is the matrix of a permutation $\sigma\in S_n$ with respect to the basis $e_1, ..., e_n$, that is 
\be
\sigma \;:\; (e_1, ..., e_n) \longrightarrow (e_{\sigma(1)}, ..., e_{\sigma(n)}) = (e_1, ..., e_n)P_{\sigma},
\ee
then the entries $p_{ij} = 1$ if $\sigma(j) = i$ (equivalently, $j = \sigma^{-1}(i)$) and $0$ otherwise. Thus,
\bea\label{e29} 
P_{\sigma}\;\mbox{defines an element of}\; \mbox{Aut}(\Gamma)\;\Leftrightarrow\;P_{\sigma}A = AP_{\sigma}.
\eea 
This can be seen as follows. The permutation $\sigma$ induces an automorphism of $\Gamma$ if and only if $a_{ij} = a_{\sigma(i)\sigma(j)}$, or equivalently, $a_{\sigma^{-1}(i)j} = a_{i\sigma(j)}$, $\forall\;i,j$. On the other hand,
\be
(P_{\sigma}A)_{ij} &=& \sum_k p_{ik}a_{kj} = p_{i\sigma^{-1}(i)}a_{\sigma^{-1}(i)j} = a_{\sigma^{-1}(i)j},\\
(AP_{\sigma})_{ij} &=& \sum_k a_{ik}p_{kj} = a_{i\sigma(j)}p_{\sigma(j)j}= a_{i\sigma(j)}.
\ee
So $a_{ij} = a_{\sigma(i)\sigma(j)},\forall\; i, j\Leftrightarrow P_{\sigma}A = AP_{\sigma}$.
\par
Since for any $\sigma\in S_n$, $e_{\sigma(i)}e_{\sigma(j)} = 0\; (i\ne j)$ is always true, so by (\ref{e27}), we have
\bea\label{e210}
\sigma\; \mbox{induces an automorphism of}\; \mathcal{E}\;\Leftrightarrow\;AP_{\sigma}^{(2)}=P_{\sigma}A. 
\eea
But $P_{\sigma}^{(2)} = (p_{ij}^2) =P_{\sigma}$ since $p_{ij} = 0$ or $1$. Thus (\ref{e29}) and (\ref{e210}) are equivalent, and hence every graph automorphism of $\Gamma$ induces an automorphism of $\mathcal E$. The second part of the theorem follows from part (2) of Theorem 2.2.
\end{proof}
\par
\section{Evolution algebras with given automorphism groups}
We now turn to the question of whether every finite group can be the automorphism group of an evolution algebra.
\begin{theorem} Let $\mathbb F$ be a field of characteristic $0$. Given any finite group $G$, there exists a finite dimensional idempotent evolution algebra $\mathcal E$ over $\mathbb F$ such that Aut$(\mathcal E)\cong G$. 
\end{theorem}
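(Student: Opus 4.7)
\smallskip
\noindent\emph{Proof proposal.} My plan is to combine Frucht's classical theorem---every finite group is isomorphic to the automorphism group of some finite graph---with Theorem 2.3 above, while choosing the structure matrix so that the diagonal subgroup $\mathcal D$ is forced to be trivial.

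By Frucht's theorem I first pick a finite simple undirected graph $\Gamma_{0}$ on $n$ vertices with $\mathrm{Aut}(\Gamma_{0}) \cong G$, and regard it as a digraph by replacing each undirected edge with two oppositely directed edges. Let $A_{0}$ be its $0$-$1$ adjacency matrix. I then attach a loop at every vertex to obtain a digraph $\Gamma$; since a set of loops placed uniformly at every vertex is fixed by every vertex permutation, $\mathrm{Aut}(\Gamma) = \mathrm{Aut}(\Gamma_{0}) \cong G$.

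Next I produce a nonsingular structure matrix $A$ with $\Gamma_{A} = \Gamma$. Since $\mathrm{char}(\mathbb F)=0$, the field $\mathbb F$ is infinite, while $A_{0}$ has only finitely many eigenvalues over its algebraic closure. I choose $c\in\mathbb F$ with $-c$ not an eigenvalue of $A_{0}$ and set $A = A_{0} + cI$. Then $A$ is invertible, its nonzero pattern matches that of $\Gamma$ exactly, and the evolution algebra $\mathcal E = \mathcal E(A)$ is idempotent by (1.2).

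Finally, because every diagonal entry of $A$ equals $c \neq 0$, the identity permutation $\tau = \mathrm{id}$ satisfies $a_{\tau(1)1}\cdots a_{\tau(n)n}\neq 0$, so in the notation of the excerpt $t_{A}=1$. Using the analysis preceding Theorem 2.2, any diagonal automorphism $\mathrm{diag}(d_{1},\dots,d_{n})\in \mathcal D$ must then satisfy $d_{j}^{2}=d_{j}$, and since $d_{j}\neq 0$ we conclude $d_{j}=1$ for all $j$. Hence $\mathcal D$ is trivial, and Theorem 2.3 gives $\mathcal G \cong \mathcal G/\mathcal D \cong \mathrm{Aut}(\Gamma_{A}) = \mathrm{Aut}(\Gamma)\cong G$. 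The only points to check are that loop-augmentation does not enlarge $\mathrm{Aut}(\Gamma_{0})$ and that a suitable $c$ exists---both of which are immediate---so Frucht's theorem together with Theorem 2.3 carries essentially all of the content.
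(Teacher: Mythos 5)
Your proposal is correct and follows essentially the same route as the paper: Frucht's theorem, perturbing the adjacency matrix by a nonzero scalar multiple of the identity to force nonsingularity and $t_A=1$ (hence trivial $\mathcal D$), and then identifying $\mathrm{Aut}(\mathcal E)$ with $\mathrm{Aut}(\Gamma)$; choosing $c$ so that $-c$ avoids the eigenvalues of $A_0$ is the same condition as the paper's choice of an integer $m$ avoiding the roots of $\det(B+xI_n)$. The only point to keep explicit is that $c$ must also be nonzero (so the loops actually appear in $\Gamma_A$), which is immediate since $\mathbb F$ is infinite.
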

\begin{proof}
A well-known result due to Frucht \cite{Fru1} says that for any finite group $G$, there exists a graph $\Gamma$ such that Aut$(\Gamma)\cong G$. Suppose $\Gamma$ has $n$ vertices. Let the adjacency matrix of $\Gamma$ be $B$. For any nonnegative integer $x$, set $A(x) = B + xI_n$, where $I_n$ is the identity matrix of size $n$. Then for $\sigma \in S_n$, 
\bea\label{e31}
P_{\sigma}A(x) = A(x)P_{\sigma}\Leftrightarrow P_{\sigma}B = BP_{\sigma}.
\eea 
So by (\ref{e29}), the graph automorphism group of the graph corresponding to $A(x)$ is the same as that of $\Gamma$ for any nonnegative integer $x$. Now $\det(A(x))$ is a polynomial of degree $n$ in $x$, so it has at most $n$ roots in $\mathbb F$. Since $\mbox{char}(\mathbb F) = 0$, $\mathbb F$ contains a copy of $\mathbb Z$, thus there is a positive integer $m\in \mathbb{F}$ such that $A(m)$ is nonsingular. 
\par
Define an $n$-dimensional evolution algebra $\mathcal E$ by using $A(m)$ as the structure matrix together with a natural basis $e_1, ..., e_n$. Then $ \mathcal{E}$ is idempotent. Since all the diagonal entries of $A(m)$ are nonzero, the identity permutation $e$ satisfies $a_{e(1)1}\cdots a_{e(n)n}\ne 0$, so $t_{A(m)} = 1$ (see the paragraph just before Theorem 2.2), and thus Theorem 2.2 (1) implies that the subgroup $\mathcal{D}$ of diagonal automorphisms of $\mathcal E$ is trivial. Now (\ref{e210}), (\ref{e31}), and Theorem 2.3 together imply $\mbox{Aut}(\mathcal{E}) \cong \mbox{Aut}(\Gamma)\cong G$.
\end{proof}
\par
We denote by $\mathcal{E}(\Gamma)$ the evolution algebra defined by using the adjacency matrix of a graph $\Gamma$ as its structure matrix with respect to a natural basis. 
\par
\begin{example} 
Let $K_n$ be the complete graph with $n$-vertices without self-loop, then $\mbox{Aut}(K_n) \cong S_n$. Abusing notation, we also  denote the adjacency matrix of $K_n$ by $K_n$. Suppose that char$(\mathbb{F})$ does not divide $n-1$. For $n>1$, $K_n$ has $0$ on the diagonal and $1$ at all other places, thus $\det (K_n) = (-1)^{n-1}(n-1)\ne 0$. So for $n>1$, $\mathcal{E}(K_n)$ is idempotent. If $D = \mbox{diag}(d_1, ..., d_n)\in \mathcal{D}$, then for each pair $i\ne j$, since both the $(i,j)$ and the $(j,i)$ entries of $K_n$ are equal to $1$, we must have $d_i^2 = d_j$ and $d_i = d_j^2$ by (\ref{e28}). Thus all $d_i$'s are roots of $x^3 - 1$. 
\par
For $n=2$, $\mbox{Aut}(K_2) \cong S_2 = \mathbb{Z}_2$. If char$(\mathbb{F}) = 3$, then since $x^3 - 1 = (x-1)^3$, $\mathcal{D}$ is trivial, and $\mbox{Aut}(\mathcal{E}(K_2))\cong\mathbb{Z}_2$. If char$(\mathbb{F}) \ne 3$ and $x^3 - 1$ splits in $\mathbb F$, then $\mathcal{D}$ is generated by  $\mbox{diag}(\xi,\xi^2)$, where $\xi$ is a primitive root of  $x^3 - 1$, so $\mathcal{D}\cong \mathbb{Z}_3$. Let $\sigma$ be the generator of $\mbox{Aut}(K_2)$, then $\sigma^{-1}\mbox{diag}(\xi,\xi^2)\sigma =  \mbox{diag}(\xi^2,\xi)$, and so $\mbox{Aut}(\mathcal{E}(K_2)) \cong S_3$ in this case.
\par
For $n\ge 3$, by using distinct triples $i,j,k$ and (\ref{e28}), we see that $\mbox{diag}(d_1, ..., d_n)\in\mathcal{D}$ if and only if all $d_i = 1$. So $\mathcal{D}$ is trivial and $\mbox{Aut}(\mathcal{E}(K_n)) \cong S_n$ by Theorem 2.3.
\end{example}
Example 3.1 shows that for an $n$-dimensional idempotent evolution algebra $\mathcal E$, $\mbox{Aut}(\mathcal{E})$ can be bigger than $S_n$. We next give a classification of $n$-dimensional idempotent evolution algebras whose automorphism groups are exactly $S_n$. 
\begin{lemma} Let $\Gamma$ be a graph with $n$-vertices (self-loops are allowed) and let $A = (a_{ij})$ be its adjacency matrix. If $\mbox{Aut}(\Gamma)\cong S_n$, then $A = aK_n+bI_n$, where $a, b \in \{0,1\}$.
\end{lemma}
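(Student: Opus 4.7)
The plan is to exploit the high transitivity of the $S_n$-action on the vertex set $\{1,\dots,n\}$. Recall that a permutation $\sigma\in S_n$ is an automorphism of $\Gamma$ precisely when $a_{ij}=a_{\sigma(i)\sigma(j)}$ for all $i,j$. So if every permutation is an automorphism, the entries of $A$ must be constant on each orbit of the induced $S_n$-action on ordered pairs $(i,j)\in\{1,\dots,n\}^2$.

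The first step is to identify these orbits. The diagonal pairs $\{(i,i):1\le i\le n\}$ form one orbit (since $S_n$ is transitive on $\{1,\dots,n\}$), and the off-diagonal pairs $\{(i,j):i\ne j\}$ form another orbit (since $S_n$ is $2$-transitive on $\{1,\dots,n\}$; for any $i\ne j$ and $i'\ne j'$ pick $\sigma$ with $\sigma(i)=i'$ and $\sigma(j)=j'$). Hence there exist constants $b=a_{ii}$ (independent of $i$) and $a=a_{ij}$ (independent of the choice of $i\ne j$) such that every diagonal entry of $A$ equals $b$ and every off-diagonal entry equals $a$.

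The second step is to read off the conclusion. Because $A$ is the adjacency matrix of a graph, its entries lie in $\{0,1\}$, so $a,b\in\{0,1\}$. Writing $K_n$ (by the convention of Example~3.1) for the matrix with $0$ on the diagonal and $1$ elsewhere, we get $A=aK_n+bI_n$, as required. The tiny edge case $n=1$ needs no separate argument: $A=[a_{11}]=0\cdot K_1+a_{11}I_1$.

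There is really no hard step here; the content is just the observation that an $S_n$-invariant $\{0,1\}$-matrix must be constant on the diagonal and constant off the diagonal. The only point requiring a moment of care is distinguishing the diagonal from the off-diagonal orbit, which is immediate from $2$-transitivity of $S_n$.
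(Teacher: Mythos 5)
Your proof is correct and takes essentially the same approach as the paper's: the paper's one-line argument likewise rests on transitivity of $S_n$ on vertices (forcing all diagonal entries equal) and $2$-transitivity on ordered pairs (forcing all off-diagonal entries equal), which you have simply spelled out as an orbit decomposition. Nothing further is needed.
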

\begin{proof} This is clear. Under the assumption, either all vertices have no self-loop, or they all have; and if $a_{ij}\ne 0$ for some pair $i\ne j$, then for for any other pair $r\ne s$, there is a $\sigma\in S_n$ such that $\sigma(i) = r$ and $\sigma(j) = s$.
\end{proof}
\begin{lemma} For $n\ge 4$, if $\mathcal E$ is an $n$-dimensional idempotent evolution algebra such that $\mbox{Aut}(\mathcal{E})\cong S_n$, then the diagonal automorphism subgroup $\mathcal D$ is trivial and $\mbox{Aut}(\Gamma_{\mathcal{E}})\cong S_n$. 
\end{lemma}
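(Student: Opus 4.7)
The plan is to exploit the structure theorem for $\mathcal{G}$ (Theorem 2.2) together with the classification of normal subgroups of $S_n$. By Theorem 2.2(1), $\mathcal{D}$ is a normal subgroup of $\mathcal{G}\cong S_n$ of \emph{odd} order. So the first step is to list the normal subgroups of $S_n$ for $n\ge 4$ and pick out the ones of odd order.

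For $n\ge 5$, $A_n$ is simple, and the only normal subgroups of $S_n$ are $\{1\}$, $A_n$, and $S_n$, with orders $1$, $n!/2$, and $n!$; only $\{1\}$ has odd order. For $n=4$, the normal subgroups of $S_4$ are $\{1\}$, the Klein four-group $V_4$, $A_4$, and $S_4$, with orders $1,4,12,24$; again only $\{1\}$ is of odd order. Hence in both cases $\mathcal{D}$ must be trivial.

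Once $\mathcal{D}=\{1\}$, Theorem 2.2(2) gives an injective homomorphism $\mathcal{G}/\mathcal{D}=\mathcal{G}\cong S_n \hookrightarrow \mathrm{Aut}(\Gamma_{\mathcal E})$. Since $\mathrm{Aut}(\Gamma_{\mathcal E})$ is itself a subgroup of $S_n$ (it permutes the $n$ vertices $e_1,\dots,e_n$), comparing orders forces $\mathrm{Aut}(\Gamma_{\mathcal E})\cong S_n$, completing the proof.

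There is no real obstacle here: the argument is essentially a one-line application of Theorem 2.2 combined with the well-known fact that, for $n\ge 4$, the trivial subgroup is the only normal subgroup of $S_n$ of odd order. The only mild point to note is why $n=4$ needs separate attention (because $S_4$ has the extra normal subgroup $V_4$ not occurring for $n\ge 5$), and that even in this case $|V_4|=4$ is even, so the conclusion still holds.
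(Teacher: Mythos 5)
Your proof is correct and follows essentially the same route as the paper: invoke Theorem 2.2(1) to see that $\mathcal{D}$ is a normal subgroup of odd order in $S_n$, observe that for $n\ge 4$ the only such subgroup is trivial (treating $n=4$ separately because of $V_4$), and then apply Theorem 2.2(2) together with $\mathrm{Aut}(\Gamma_{\mathcal E})\le S_n$ to conclude. The only difference is that you spell out the final order-comparison step, which the paper leaves implicit.
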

\begin{proof}
For $n=4$, $S_4$ has two nontrivial normal subgroups: the alternating subgroup $A_4$ and $V_4 = \{(1), (12)(34), (13)(24), (14)(23)\}$, they both have even orders. Since the order of $\mathcal D$ is an odd number by Theorem 2.2 (1), $\mathcal D$ must be trivial. For $n\ge 5$, the only nontrivial normal subgroup of $S_n$ is the alternating subgroup $A_n$, so $\mathcal D$ is also trivial. Now Theorem 2.2 (2) completes the proof.
\end{proof}
\par
 Let $\mathcal E$ be an $n$-dimensional idempotent evolution algebra with a natural basis $e_1, ..., e_n$, and the structure matrix $A =(a_{ij})$. Assume that $\mathcal{G}=\mbox{Aut}(\mathcal{E}) = S_n$. Consider the cases $n < 4$. 
\par
The case $n=1$ is trivial. Let $n=2$. Since $\mathcal{D}\subset S_2 = \mathbb{Z}_2$ and $|\mathcal{D}|$ is an odd number, $\mathcal D$ must be trivial, and so $\mbox{Aut}(\Gamma_A)\cong \mathbb{Z}_2$. Thus $A =\begin{pmatrix} a & b\\ b & a\end{pmatrix},a\ne 0, a^2 - b^2 \ne 0$; or $a=0, b\ne 0$ and $x^3 - 1$ has only one root in $\mathbb{F}$. These evolution algebras are isomorphic to one of the evolution algebras given by the structure matrices  $C = \begin{pmatrix} 1 & c\\ c & 1\end{pmatrix}, c^2 \ne 1$, or isomorphic to the one given by the structure matrix $\begin{pmatrix} 0 & 1\\ 1& 0\end{pmatrix}$. In the former case, the isomorphism is given by $e_i \rightarrow ae_i', i=1,2$, where $(e_1', e_2')$ is the natural basis corresponding to the structure matrix $C$ and $c = b/a$. In the latter case, the isomorphism is given by $e_i \rightarrow be_i', i=1,2$. We will show in Theorem 3.2 that the isomorphism classes of these evolution algebras are represented by $\begin{pmatrix} 1 & c\\ c & 1\end{pmatrix}, c^2 \ne 1$ and  $\begin{pmatrix} 0 & 1\\ 1 & 0\end{pmatrix}$ (the structure matrices depend on the choice of the natural bases, so it needs to show that the algebras represented by these matrices are non-isomorphic). 
 \par
Now consider the case $n=3$. Let $\mbox{diag}(d_1,d_2,d_3)\in\mathcal{G}$. If at least two of the diagonal elements $a_{ii}, 1\le i\le 3$, of $A$ are nonzero, then by (\ref{e28}) and the fact that $A$ is nonsingular, we see that all $d_i = 1$, which implies that $\mathcal D$ is trivial and $\mbox{Aut}(\Gamma_{\mathcal{E}})\cong S_3$. That will imply (use (\ref{e29})) 
\bea\label{e32}
 A = \begin{pmatrix} a & b & b\\ b & a & b\\ b & b & a\end{pmatrix},\; a\ne 0, b\ne a, -a/2 \;(\mbox{char}(\mathbb{F})\ne 2).
\eea 
The isomorphism classes of these $3$-dimensional idempotent evolution algebras are represented by the following family of structure matrices (see Theorem 3.2):
\bea\label{e33}
 A = \begin{pmatrix} 1 & c & c\\ c & 1 & c\\ c & c & 1\end{pmatrix},\; c\ne 1, -1/2 \;(\mbox{char}(\mathbb{F})\ne 2).
\eea 
 The isomorphism is given by $e_i \rightarrow ae_i', 1\le i\le 3$, where $(e_1', e_2', e_3')$ is a natural basis of the evolution algebra with the structure matrix given by (\ref{e33}) and $c=b/a$.
\par
 Assume that there is only one $a_{ii}\ne 0$. Then without lost of generality, we can assume $a_{11} = a_{22} = 0$ and $a_{33}\ne 0$. If $\mathcal{D}$ is trivial, we again have $\mbox{Aut}(\Gamma_{\mathcal{E}})\cong S_3$, that will imply all $a_{ii}\ne 0$, a contradiction. So $\mathcal{D}$ is nontrivial, then as a normal subgroup of $S_3$, $\mathcal{D}\cong \mathbb{Z}_3$. Let $D=\mbox{diag}(d_1,d_2,d_3)$ be a generator of $\mathcal{D}$. Then $d_3 = 1$ since $a_{33}\ne 0$, and both $d_1$ and $d_2$ are primitive roots of $x^3-1$. Since $\mathcal{G}/\mathcal{D} \cong \mathbb{Z}_2$, there exists an element of the form $P_{(12)}D$ in $\mathcal{G}$ (see (\ref{e22})). By using $A(P_{(12)}D)^{(2)} = P_{(12)}DA$ (see (\ref{e210})), we see that $a_{12}\ne 0 \Leftrightarrow a_{21}\ne 0$; $a_{13}\ne 0 \Leftrightarrow a_{23}\ne 0$; and $a_{31}\ne 0 \Leftrightarrow a_{32}\ne 0$. Since we assumed that $a_{11} = a_{22} = 0$, we must have $a_{12}\ne 0$, otherwise $A$ would be singular. Also, since $d_3 = 1$, any of $a_{13}, a_{23}, a_{31}, a_{32}$ is nonzero would imply $d_1 = d_2 = 1$. So all these entries must be $0$. Now use $A(P_{(12)}D)^{(2)} = P_{(12)}DA$ again, we see under the assumption that $\mathcal{D}$ is nontrivial, we have 
 \bea\label{e34}
 A = \begin{pmatrix} 0 & a & 0\\ a & 0 & 0\\ 0 & 0 & b\end{pmatrix},\quad a\ne 0, b\ne 0.
 \eea
 All these evolution algebras are isomorphic to the one with $a=b=1$ by the map $e_i\rightarrow ae_i', i=1,2, e_3\rightarrow be_3'$.
 \par
 Assume that all $a_{ii} = 0$, let $D=\mbox{diag}(d_1,d_2,d_3)$ be a diagonal automorphism. Argue as before by using the fact that $a_{ij}\ne 0 $ implies $d_i = d_j^2$ and that $A$ is nonsingular, we see that $d_i = 1, 1\le i\le 3$. Thus $\mathcal D$ is trivial, and that leads to
 \bea\label{e35}
 A = \begin{pmatrix} 0 & a & a\\ a & 0 & a\\ a & a & 0\end{pmatrix},\quad a\ne 0.
 \eea
 So up to isomorphism, there is only one such evolution algebra represented by the one with $a=1$.
 \par
 We now ready to classify all $n$-dimensional idempotent evolution algebras $\mathcal E$ such that $\mbox{Aut}(\mathcal{E}) = S_n$.
 \begin{theorem} The following is a complete list of non-isomorphic $n$-dimensional idempotent evolution algebras whose automorphism group is $S_n$.  
 \begin{enumerate}
 \item For $n = 1$, there is only one isomorphic class given by the structure matrix $(1)$. 
 \item For $n = 2$, the non-isomorphic classes are represented by the structure matrices $\begin{pmatrix} 1 & c\\ c & 1\end{pmatrix}, c^2\ne 1$; and $\begin{pmatrix} 0 & 1\\ 1 & 0\end{pmatrix}$ (only if $x^3 - 1$ has one root in $\mathbb{F}$).
\item For $n = 3$, the non-isomorphic classes are represented by the following structure matrices (note that the class given by the matrix in the middle only exists if $x^3 - 1$ has $3$ distinct roots in $\mathbb F$):
\be
\qquad\begin{pmatrix} 1 & c & c\\ c & 1 & c\\ c & c & 1\end{pmatrix}, c\ne 1, -1/2\;(\mbox{char}(\mathbb{F})\ne 2),\\
\begin{pmatrix} 0 & 1 & 0\\ 1 & 0 & 0\\ 0 & 0 & 1\end{pmatrix},\; 
\begin{pmatrix} 0 & 1 & 1\\ 1 & 0 & 1\\ 1 & 1 & 0\end{pmatrix} (\mbox{char}(\mathbb{F})\ne 2).
\ee 
\item For $n \ge 4$, 
the non-isomorphic classes are represented by the following structure matrices ($\mbox{char}(\mathbb{F})\nmid n-1$ for the second and the third cases):
\be
 \qquad\begin{pmatrix} 1 & c & \cdots & c\\ c & 1 & \ddots & \vdots\\
 \vdots & \ddots & \ddots & c\\ c &\cdots & c & 1\end{pmatrix}, c\ne 1, 1/(1-n),\;
 \begin{pmatrix} 0 & 1 & \cdots & 1\\ 1 & 0 & \ddots & \vdots\\
 \vdots & \ddots & \ddots & 1\\ 1 &\cdots & 1 & 0\end{pmatrix}.
\ee
 \end{enumerate}
 \end{theorem}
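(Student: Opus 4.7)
The cases $n \le 3$ are essentially handled by the discussion preceding the theorem: there the admissible structure matrices are already catalogued in each sub-case (according to how many of the $a_{ii}$ are nonzero, and whether $x^3 - 1$ splits in $\mathbb{F}$). What remains for these low dimensions is (a) to normalize each admissible matrix to the canonical form listed in the theorem, which is achieved by a scalar basis rescaling $e_i \mapsto t\, e_i'$ (dividing the structure matrix by $t$), and (b) to verify pairwise non-isomorphism of the representatives listed within each case, for which Theorem 2.1 can be applied directly in each sub-case.

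For $n \ge 4$, which is the main new work, the plan is as follows. By Lemma 3.2, $\mathcal{D}$ is trivial and $\mbox{Aut}(\Gamma_A) \cong S_n$. Since $\mathcal{D}$ is trivial, each $\sigma \in S_n$ lifts uniquely to an element of $\mathcal{G}$, and by Theorem 2.3 this lift is the pure permutation matrix $P_\sigma$. The automorphism condition (\ref{e210}), together with $P_\sigma^{(2)} = P_\sigma$, then gives $P_\sigma A = A P_\sigma$ for every $\sigma \in S_n$. A standard commutant calculation shows that the matrices commuting with all coordinate permutations of $\mathbb{F}^n$ are spanned by $I_n$ and the all-ones matrix $J$, so $A = \alpha I_n + \beta(J - I_n)$ for some $\alpha, \beta \in \mathbb{F}$. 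The eigenvalues of $A$ are $\alpha - \beta$ (multiplicity $n-1$) and $\alpha + (n-1)\beta$ (simple), and nonsingularity is equivalent to both being nonzero. I split on whether $\alpha = 0$: if $\alpha \ne 0$, rescale the basis so that $\alpha = 1$ and set $c = \beta$, yielding the first canonical form subject to $c \ne 1$ and $c \ne 1/(1-n)$ (the latter forcing $\mbox{char}(\mathbb{F}) \nmid n-1$ in order for the list of admissible $c$ to actually exclude only one value); if $\alpha = 0$, then $\beta \ne 0$ and rescaling to $\beta = 1$ gives $A = J - I_n$, whose determinant $(-1)^{n-1}(n-1)$ is nonzero exactly when $\mbox{char}(\mathbb{F}) \nmid n-1$.

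For the non-isomorphism claim, I apply Theorem 2.1: an isomorphism $\mathcal{E}(A) \to \mathcal{E}(B)$ has the form $P = P_\sigma D$ with $B P^{(2)} = PA$. For $n \ge 4$ both $A$ and $B$ commute with every $P_\sigma$, so the relation reduces to $B D^2 = D A$. Reading diagonal entries (both normalized to $1$) forces $d_i = 1$, and then off-diagonal entries force $c = c'$, so distinct parameters give non-isomorphic algebras inside the first family; the second family has a single normalized representative. Between families, the diagonal zero-pattern is preserved by $B D^2 = D A$, so a first-family algebra cannot be isomorphic to the second-family one. Analogous, more elaborate case-checks (again via Theorem 2.1) handle $n \le 3$. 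The main obstacle I anticipate is the low-dimensional bookkeeping, in particular attaching the existence of the middle family for $n = 3$ and the second family for $n = 2$ to the splitting behaviour of $x^3 - 1$ over $\mathbb{F}$; these conditions derive from whether a nontrivial diagonal automorphism subgroup $\mathcal{D}$ of order $3$ is allowed to appear.
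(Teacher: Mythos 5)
Your overall skeleton for $n\ge 4$ --- reduce the structure matrix to the form $\alpha I_n+\beta(J-I_n)$ where $J$ is the all-ones matrix, normalize by a diagonal rescaling of the natural basis, and prove pairwise non-isomorphism via Theorem 2.1 --- is the paper's, and your non-isomorphism argument (reducing $BP^{(2)}=PA$ with $P=P_\sigma D$ to $BD^{2}=DA$ and comparing diagonal and off-diagonal entries) is a slightly cleaner packaging of the entrywise computation the paper carries out. The deferral of $n\le 3$ to the discussion preceding the theorem also matches the paper, which notes that only the non-isomorphism of the listed representatives remains to be checked in those cases.

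There is, however, a genuine gap in the step that produces the form $\alpha I_n+\beta(J-I_n)$. Triviality of $\mathcal D$ tells you that each $\sigma\in S_n$ has a \emph{unique} lift in $\mathcal G$, necessarily of the form $P_\sigma D_\sigma$ by Corollary 2.1, but it does not tell you that $D_\sigma=I_n$, and Theorem 2.3 cannot be invoked here: that theorem assumes the structure matrix is the $\{0,1\}$ adjacency matrix of a graph and concludes that graph automorphisms lift to pure permutations; it says nothing about an arbitrary structure matrix. In fact your intermediate claim that $P_\sigma A=AP_\sigma$ for every $\sigma$ is false for a general natural basis of such an algebra: if $A_0=I_n+c(J-I_n)$ and $D=\mbox{diag}(d_1,\ldots,d_n)$ is invertible but not scalar, then $B=D^{-1}A_0D^{2}$ is the structure matrix of the \emph{same} algebra with respect to the rescaled basis $d_ie_i$, its automorphism group is still $S_n$, yet $B$ has non-constant diagonal and does not commute with the permutation matrices (the lift of $\sigma$ in this basis is $P_\sigma D_\sigma$ with $D_\sigma\ne I_n$). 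So the commutant computation only describes those structure matrices that happen to lie in the commutant; it does not show that every algebra with automorphism group $S_n$ has one. To close the gap one must exploit the relations (\ref{e26}) satisfied by the lifts $P_\sigma D_\sigma$ of the transpositions: e.g.\ $a_{mm}\ne 0$ forces $d_m^{(\sigma)}=1$ at fixed points $m$ of $\sigma$, whence $a_{mk}/a_{kk}^{2}$ and $a_{kk}a_{km}$ are independent of $k$, and then the rescaling $e_i\mapsto a_{ii}^{-1}e_i$ (or its analogue when the diagonal vanishes) lands the structure matrix in the commutant. To be fair, the paper is terse at exactly the same point (``By Lemma 3.1 and Lemma 3.2, we have (\ref{e36})''), but it at least secures the zero pattern through Lemma 3.1; your version replaces the missing argument with an inapplicable theorem, so the step as written would fail.
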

 \begin{proof}  Let $\mathcal E$ be an $n$-dimensional idempotent evolution algebra with the structure matrix $A$ with respect to a natural basis $e_1, ..., e_n$ such that $\mbox{Aut}(\mathcal{E}) = S_n$. To prove the theorem, we first find the structure matrices for $n \ge 4$. By Lemma 3.1 and Lemma 3.2, we have
\bea\label{e36}
\qquad A = \begin{pmatrix} a & b & \cdots & b\\ b & a & \ddots & \vdots\\
\vdots & \ddots & \ddots & b\\ b &\cdots & b & a\end{pmatrix}\;\mbox{such that}\; \det(A) \ne 0.
\eea
Since $\det(A) = (a+(n-1)b)(a-b)^{n-1}$, we have $\det(A)\ne 0\Leftrightarrow a\ne b, (1-n)b$. Denote the matrix of (\ref{e36}) by $A(a,b)$ and the corresponding evolution algebra by $\mathcal{E}(a,b)$. If $a\ne 0$, then $\mathcal{E}(a,b)$ is isomorphic to $\mathcal{E}(1,b/a)$ by $e_i\rightarrow ae'_i, 1\le i\le n$. If $a = 0$, $\mathcal{E}(0,b)$ is isomorphic to $\mathcal{E}(0,1)$ by $e_i\rightarrow be'_i, 1\le i\le n$. 
\par
It remains to prove that $\mathcal{E}(1,c), c\ne 1, 1/(1-n)$, and $\mathcal{E}(0,1)$ are pairwise non-isomorphic evolution algebras.
\par
Let $K_n = (\gamma_{ij})$ (the adjacency matrix of the complete graph with $n$ vertices without self-loop). Then  $\gamma_{ii} = 0, 1\le i\le n$ and $\gamma_{ij} = 1, i\ne j$. Note that $A(1,c) = I_n + cK_n$ and $A(0,1) = K_n$.
\par
Assume that $\mathcal{E}(0,1)$ is isomorphic to some $\mathcal{E}(1,c)$. Then by Theorem 2.1 and equation (\ref{e21}), there exists a matrix $P=(p_{ij})$ and a permutation $\sigma\in S_n$ such that 
\be
p_{ij}\ne 0\Leftrightarrow i=\sigma(j)\;\mbox{and} \;K_nP^{(2)} = P+cPK_n. 
\ee
Thus for all $1\le i, j\le n$, from
\be
(K_nP^{(2)})_{ij} &=&\sum_{k=1}^n\gamma_{ik}p_{kj}^2 = 
\gamma_{i\sigma(j)}p^2_{\sigma(j)j}\quad\mbox{and}\\
(P+cPK_n)_{ij}&=&p_{ij} +c\sum_{k=1}^np_{ik}\gamma_{kj} = p_{ij}+c\gamma_{\sigma^{-1}(i)j},
\ee 
we have $\gamma_{i\sigma(j)}p^2_{\sigma(j)j}=p_{ij}+c\gamma_{\sigma^{-1}(i)j}$. Set $i = \sigma(j)$, we have $p_{\sigma(j)j} = 0$, which is a contradiction. Thus $\mathcal{E}(0,1)$ cannot be isomorphic to any $\mathcal{E}(1,c)$.
\par
Now assume that $\mathcal{E}(1,c)$ is isomorphic to $\mathcal{E}(1,b)$ for some $c\ne b$. Again apply Theorem 2.1 and equation (\ref{e21}), let $P=(p_{ij})$ and $\sigma\in S_n$ be such that 
\be
p_{ij}\ne 0\Leftrightarrow i=\sigma(j)\; \mbox{and}\; P^{(2)}+bK_nP^{(2)} = P+cPK_n.
\ee 
Then similar to the discussions above, for all $1\le i, j\le n$, we have $p_{ij}^2+\gamma_{i\sigma(j)}p^2_{\sigma(j)j}=p_{ij}+c\gamma_{\sigma^{-1}(i)j}$. 
Choose $i = \sigma(j)$, we have $p^2_{\sigma(j)j} = p_{\sigma(j)j}$, which implies $p_{\sigma(j)j} = 1, 1\le j\le n$, that is, $P$ is the permutation matrix of $\sigma$. Thus since $P^{(2)} = P$, we have $bK_nP = cPK_n$. But that would lead to a contradiction since $c\ne b$ and $K_nP = PK_n$ by (\ref{e29}). Therefore $\mathcal{E}(1,c), c\ne 1, 1/(1-n)$ are pairwise non-isomorphic. Finally, note that the proof covers the case $n = 2, 3$ too (recall that we only need to show the listed matrices define non-isomorphic algebras for these two cases).
\end{proof}
\section{Diagonal automorphism subgroup of an idempotent evolution algebra}
\par
Let $\mathcal{E}$ be an idempotent evolution algebra with a natural basis $e_1, ..., e_n$ and the structure matrix $A=(a_{ij})$, and let $\mathcal{D}$ be the diagonal automorphism subgroup of $\mbox{Aut}(\mathcal{E})$. We consider the problem of which idempotent evolution algebra $\mathcal{E}$ processes the maximal possible $\mathcal{D}$. To simplify our discussions, we assume that $\mathbb{F}$ is algebraically closed of characteristic $0$ in this section. 
\par
Let $\sigma\in S_n$ be such that $a_{\sigma(j)j}\ne 0, 1\le j\le n$. Suppose that 
\be
\sigma =\sigma_1\sigma_2\cdots\sigma_p= (i_1\cdots i_r)(j_1\cdots j_s)\cdots(p_1\cdots p_t)
\ee 
is the decomposition of $\sigma$ into disjoint cycles. For $D =\mbox{diag}(d_1, ..., d_n)$ in $\mathcal{D}$, we can divide $\{d_1, ..., d_n\}$ into disjoint subsets 
\be
\{d_{i_1}, ..., d_{i_r}\}, \{d_{j_1}, ..., d_{j_s}\}, ..., \{d_{p_1}, ..., d_{p_t}\}
\ee 
according to the decomposition of $\sigma$. Then by (\ref{e28}), we see that the $d_{i_h}$'s are roots of $x^{2^r - 1} - 1$, the $d_{j_h}$'s are roots of $x^{2^s - 1} - 1$, and so on. Furthermore, we have $d_{i_{h}}^2 = d_{\sigma_1({i_h})} = d_{i_{h+1}}$, and so on.
\par
If $\tau\in S_n$ is another permutation such that $a_{\tau(j)j}\ne 0, 1\le j\le n$, let $\tau =\tau_1\tau_2\cdots\tau_q$ be its disjoint decomposition. If, say, for some $1\le h\le r$, $i_h$ appears in $\tau_1$, then $d_{i_h}$ is also a root of $x^{2^t - 1} - 1$, where $t = o(\tau_1)$. This will imply that $d_{i_h}$ is a common root of $x^{2^r - 1} - 1$ and $x^{2^t - 1} - 1$, and so it is a root of $x^{d} - 1$, where $d = \gcd(2^r-1,2^t-1)$. Therefore, for a fixed $n$, the maximal possible $D$ occur when there exists only one $\sigma$ such that $a_{\sigma(j)j}\ne 0, 1\le j\le n$. This can also be seen by the fact that more nonzero entries of $A$ would impose more constraints on the possible value of the $d_i$'s. Thus, given $n$, the maximal possible $\mathcal{D}$ occur among those $\mathcal{E}$ such that the structure matrices satisfy $a_{ij}\ne 0 \Leftrightarrow i = \sigma(j)$ for some $\sigma\in S_n$. 
\par
Let $A$ be a structure matrix such that $a_{ij}\ne 0 \Leftrightarrow i = \sigma(j)$ for some fixed $\sigma\in S_n$. If $\sigma=\sigma_1\sigma_2\cdots\sigma_p$ ($p\ge 1$) is the decomposition of $\sigma$ into the product of disjoint cycles cycles of length $\ge 2$,  let $o(\sigma_i) = n_i, 1\le i\le p$. Then $\sum_{i=1}^pn_i \le n$. From the above discussions, we see that in this case, the maximal possible order of $\mathcal{D}$ is 
\be
(2^{n_1}-1)(2^{n_2} - 1)\cdots (2^{n_p} - 1) &< &2^{n_1}2^{n_2} \cdots (2^{n_p} -1)\\
{} &<& 2^{\sum_{i=1}^pn_i} - 1 \le 2^n - 1. \nonumber
\ee
On the other hand, if $\sigma$ is a cycle of length $n$, then from $d_j^2 = d_{\sigma(j)}$, $1\le j\le n$, we see that $D$ is determined by $d_1$. Since we assume that $\mathbb{F}$ is algebraically closed of characteristic $0$, in this case, $\mathcal{D}$ is isomorphic to the cyclic group $C_{2^n-1}$ formed by the roots of $x^{2^n-1}-1$. 
\begin{theorem}
Assume the base field $\mathbb F$ is algebraically closed of characteristic $0$. Let $\mathcal{E}$ be an idempotent evolution algebra with a natural basis $e_1, ..., e_n$ and the structure matrix $A=(a_{ij})$, and let $\mathcal{D}$ be the diagonal automorphism subgroup of $\mbox{Aut}(\mathcal{E})$. 
\begin{enumerate}
\item The maximal possible order of $\mathcal{D}$ is $2^n - 1$. This maximal order is achieved if and only $a_{ij}\ne 0 \Leftrightarrow i = \sigma(j)$ for some fixed cyclic permutation $\sigma\in S_n$ of length $n$; and in this case, $\mathcal{D}$ is cyclic of order $2^n - 1$ and $\mbox{Aut}(\mathcal{E})\cong C_{2^n-1}\rtimes C_n$.
\item All $n$-dimensional idempotent $\mathcal{E}$ with maximal $\mathcal{D}$ are isomorphic to the one represented by the structure matrix $A = P_{\sigma}$, where $\sigma=(12\cdots n)$.
\end{enumerate}
\end{theorem}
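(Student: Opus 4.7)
The plan is to combine the upper-bound discussion already sketched before the theorem with an explicit change-of-basis argument and a direct computation on a single canonical model. The proof splits naturally into three parts: the bound on $|\mathcal D|$, the classification of the nonzero patterns of $A$ that realize the bound, and the structural computation of $\mbox{Aut}(\mathcal E)$ on the resulting canonical model.

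For the upper bound and characterization in (1), I would first make the preceding discussion rigorous. Since $\det A \ne 0$, some $\tau \in S_n$ satisfies $a_{\tau(j)j} \ne 0$ for all $j$, and its cycle decomposition $\tau = \tau_1 \cdots \tau_p$ of lengths $n_i$ forces any $D = \mbox{diag}(d_1,\ldots,d_n) \in \mathcal D$ to satisfy $d_j^{2^{n_i}-1} = 1$ on each orbit, giving $|\mathcal D| \le \prod_i (2^{n_i}-1)$. The elementary estimate $(2^a-1)(2^b-1) \le 2^{a+b}-3 < 2^{a+b}-1$ for $a,b \ge 1$, together with $\sum n_i = n$, yields $\prod_i (2^{n_i}-1) \le 2^n-1$ with equality only when $p=1$ and $\tau$ is an $n$-cycle, call it $\sigma$. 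To rule out additional nonzero entries, I observe that if $a_{ij} \ne 0$ with $i \ne \sigma(j)$, then $i = \sigma^k(j)$ for some $1 < k < n$ (since $\sigma$ acts transitively); combined with $d_{\sigma^r(j)} = d_j^{2^r}$ this forces $d_j^{2^k-2} = 1$, and since $0 < 2^k - 2 < 2^n - 1$ this is a nontrivial constraint that strictly shrinks $\mathcal D$. Hence $|\mathcal D| = 2^n-1$ iff the support of $A$ is exactly the graph of a single $n$-cycle $\sigma$.

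For (2), after relabeling the basis (which yields an isomorphic evolution algebra) I may take $\sigma = (1\,2\,\cdots\,n)$, so the nonzero entries of $A$ are $a_1,\ldots,a_n$, with $a_j$ at position $(\sigma(j),j)$. I then seek an isomorphism $\mathcal E(A) \to \mathcal E(P_\sigma)$ via a diagonal $P = \mbox{diag}(p_1,\ldots,p_n)$. By Theorem 2.1, the defining equation $P_\sigma P^{(2)} = PA$ reduces to the recursion $p_{\sigma(j)} = p_j^2/a_j$; iterating $n$ times around the cycle collapses this to a single scalar equation of the form $p_1^{2^n-1} = \prod_{j=1}^n a_j^{2^{n-j}}$, which is solvable since $\mathbb F$ is algebraically closed. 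The auxiliary condition $P_\sigma(P \ast P) = 0$ holds automatically because $P$ is diagonal.

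It remains to compute $\mbox{Aut}(\mathcal E)$ for the canonical model $A = P_\sigma$ with $\sigma = (1\,2\,\cdots\,n)$. The constraints $d_j^2 = d_{\sigma(j)}$ from (\ref{e28}) parametrize $\mathcal D$ by $d_1 \in \mathbb F^*$ subject to $d_1^{2^n} = d_1$, giving $\mathcal D \cong C_{2^n-1}$; meanwhile $\Gamma_A$ is a directed $n$-cycle, so $\mbox{Aut}(\Gamma_A) = \langle\sigma\rangle \cong C_n$ and Theorem 2.3 gives $\mathcal G/\mathcal D \cong C_n$. Since $P_\sigma^{(2)} = P_\sigma$ (all entries are $0$ or $1$) and $AP_\sigma = P_\sigma^2 = P_\sigma A$, the permutation matrix $P_\sigma$ is itself an algebra automorphism of order $n$ projecting to a generator of $\mathcal G/\mathcal D$; this provides an explicit splitting of the extension, giving $\mbox{Aut}(\mathcal E) \cong C_{2^n-1} \rtimes C_n$. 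The most delicate step will be the rigidity portion of (1) — ruling out extra nonzero entries of $A$ when $|\mathcal D|$ is maximal — which rests on the divisibility observation $2^n - 1 \nmid 2^k - 2$ for $1 < k < n$.
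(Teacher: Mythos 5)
Your proposal is correct and follows essentially the same route as the paper: the cycle-decomposition bound $\prod_i(2^{n_i}-1)<2^n-1$ for $p>1$, a diagonal change of basis solving a recursion around the $n$-cycle with one scalar equation $p_1^{2^n-1}=\prod_j a_j^{2^{n-j}}$ settled by algebraic closedness, and $\mathrm{Aut}(\Gamma_{P_\sigma})\cong C_n$ combined with the fact that permutation matrices act as algebra automorphisms of $\mathcal E(P_\sigma)$ to get the semidirect product. Your rigidity step is in fact slightly more explicit than the paper's (which only remarks that extra nonzero entries impose more constraints); just note that when $a_{ij}\ne 0$ with $i\ne\sigma(j)$ you must also allow $k=0$ (i.e.\ $i=j$), in which case $d_j^2=d_j$ forces $d_j=1$ directly, so the conclusion is unaffected.
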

\begin{proof}
It remains to prove (2) and $\mbox{Aut}(\mathcal{E}(P_{\sigma}))\cong C_{2^n-1}\rtimes C_n$, where $\sigma=(12\cdots n)$. Since $\Gamma_{P_{\sigma}}$ is a cyclic digraph, $\mbox{Aut}(\Gamma_{P_{\sigma}}) \cong C_n$. So Theorem 2.2 implies $\mbox{Aut}(\mathcal{E}(P_{\sigma}))\cong C_{2^n-1}\rtimes C_n$. 
\par
To prove (2), let $B$ be the structure matrix of an $n$-dimensional evolution algebra with a diagonal automorphism subgroup of order $2^n-1$. Then there exists $b = (b_1, ..., b_n)^T\in \mathbb{F}^n$ such that $b_1\cdots b_n \ne 0$ and $B= P_{\tau}\mbox{diag}(b_1, ..., b_n)$ for some cyclic permutation $\tau$ of length $n$. By relabeling if necessary, we can assume that $\tau = \sigma = (12\cdots n)$. Thus we need to prove that $\mathcal{E}(B) \cong \mathcal{E}(P_{\sigma})$ for $B= P_{\sigma}\mbox{diag}(b_1, ..., b_n)$. By Theorem 2.1, it suffices to find a nonsingular diagonal matrix $D =\mbox{diag}(d_1, ..., d_n)$ such that $BD^{(2)} = DP_{\sigma}$. This is equivalent to solving the following equation for the $d_i$'s:
\be
P_{\sigma}\mbox{diag}(b_1, ..., b_n)\mbox{diag}(d_1^2, ..., d^2_n) = \mbox{diag}(d_1, ..., d_n)P_{\sigma}.
\ee
That is, to solve
\be
\mbox{diag}(b_1d_1^2, ..., b_nd_n^2) &=& P_{\sigma}^{-1}\mbox{diag}(d_1, ..., d_n)P_{\sigma} \\
                                                         {}  &=& \mbox{diag}(d_{\sigma(1)}, ..., d_{\sigma(n)})\\
                                                         {}  &=& \mbox{diag}(d_2, ..., d_{n}, d_1).
\ee
Note that if we know $d_1$, then we can find all $d_i, i > 1$, by
\bea\label{e42}
\qquad d_2 = b_1d_1^2, \; d_3 = b_2d_2^2 = b_2b_1^2d_1^4, \;...,\;  d_n = (\prod_{i=1}^{n-1}b_i^{2^{n-i-1}})d_1^{2^{n-1}}.
\eea
The constraint for $d_1$ is 
\be
d_1 = b_{n}b_{n-1}^2\cdots b_1^{2^{n-1}}d_1^{2^{n}} = (\prod_i^nb_i^{2^{n-i}})d_1^{2^n}.
\ee
Thus, if we take a root of the polynomial $(\prod_i^nb_i^{2^{n-i}})x^{2^n-1} - 1$ as $d_1$, then we can find a nonsingular diagonal matrix $D$ that satisfies $BD^{(2)} = DP_{\sigma}$ by (\ref{e42}). 
\end{proof}
\par\medskip
\section*{Acknowledgements}
Sriwongsa acknowledges the financial supports of the Center of Excellence in Theoretical and Computational Science (TaCS-CoE), Faculty of Science, KMUTT and the Thailand Science Research and Innovation (TSRI)  Basic Research  Fund: Fiscal year 2022 (FF65).  
\par
Zou acknowledges the support of a Simons Foundation Collaboration Grant for Mathematicians (416937), and thanks Nanning Normal University for its support (through the grants NNSFC (11961050) and GNSF (2020GXNSFAA159053)) during his visit there.
 
\end{document}